\documentclass[conference]{IEEEtran}
\IEEEoverridecommandlockouts

\usepackage{amsmath,amssymb,amsfonts,amsthm}
\usepackage{graphicx}
\usepackage{booktabs}
\usepackage{cite}
\usepackage{bm}
\usepackage{multirow}
\usepackage{array}

\newtheorem{proposition}{Proposition}
\newtheorem{corollary}{Corollary}
\newtheorem{remark}{Remark}

\newcommand{\R}{\mathbb{R}}
\newcommand{\bx}{\mathbf{x}}
\newcommand{\xz}{\mathbf{x}_0}
\newcommand{\blam}{\bm{\lambda}}
\newcommand{\umax}{u_{\max}}
\newcommand{\tf}{t_f}
\newcommand{\ts}{t_s}
\newcommand{\sig}{\sigma}
\newcommand{\ba}{\mathbf{a}}

\begin{document}

\title{On Linear Critical-Region Boundaries in\\
Continuous-Time Multiparametric Optimal Control}

\author{%
  \IEEEauthorblockN{Lida Lamakani\textsuperscript{1,2}\quad
                    Efstratios N.~Pistikopoulos\textsuperscript{1,2}}\\[2pt]
  \IEEEauthorblockA{%
    \textsuperscript{1}Texas A\&M Energy Institute\quad
    \textsuperscript{2}Artie McFerrin Department of Chemical Engineering\\
    Texas A\&M University, College Station, TX~77843, USA\\
    \texttt{lida.lamakani@tamu.edu}\quad\texttt{stratos@tamu.edu}}
}

\maketitle

\begin{abstract}
When an optimal control problem is solved for all possible initial
conditions at once, the initial-state space splits into critical
regions, each carrying a closed-form control law that can be
evaluated online without solving any optimization. This is the
multiparametric approach to explicit control. In the continuous-time
setting, the boundaries between these regions are determined by
extrema of Lagrange multipliers and constraint functions along the
optimal trajectory. Whether a boundary is a hyperplane, computable
analytically, or a curved manifold that requires numerical methods
has a direct effect on how the partition is built.

We show that a boundary is a hyperplane if and only if the relevant
extremum is attained at either the initial time or the terminal time,
regardless of the initial condition. The reason is that the costate
is a linear function of the initial state at any fixed time, so when
the extremum is tied to a fixed endpoint, the boundary condition is
linear and the boundary normal follows directly from two matrix
exponentials and a linear solve. When the extremum occurs at a time
that shifts with the initial condition, such as a switching time or
an interior stationary point, the boundary is generally curved.

We demonstrate the result on a third-order system, obtaining the
complete three-dimensional critical-region partition analytically
for the first time in this problem class. A comparison with a
discrete-time formulation shows how sharply the region count grows
under discretization, while the continuous-time partition remains
unchanged.
\end{abstract}

\begin{IEEEkeywords}
explicit model predictive control;
multiparametric optimal control;
Pontryagin's minimum principle;
critical-region boundaries;
hyperplane geometry;
implicit function theorem;
switching time
\end{IEEEkeywords}

\section{Introduction}

Multiparametric model predictive control (mp-MPC) solves the optimal
control problem offline for all possible initial conditions and
stores the result as a piecewise affine function of the state
\cite{bemporad2002explicit,tondel2003algorithm,pistikopoulos2020multi,pistikopoulos2007multi}.
Once this offline computation is done, applying the controller
online requires nothing more than finding which region the current
state belongs to and reading off the corresponding affine law.
There is no optimization to solve at runtime, which makes this
approach well suited to systems where computation time or hardware
resources are limited \cite{rawlings2020model}.

The discrete-time formulation of this framework is well established
\cite{tondel2003algorithm,sakizlis2004design,oberdieck2015explicit,oberdieck2017explicit}.
However, as the time grid is refined, the number of critical regions
typically grows combinatorially, since each additional node introduces
new combinations of active constraints.

A continuous-time alternative that avoids this growth was introduced
in \cite{lamakani2026multiparametric}, building on earlier work in \cite{sakizlis2005explicit}.
In this approach, Pontryagin's minimum principle is applied directly,
leading to an explicit partition of the initial-state space without
time discretization.
The resulting boundaries are described by two scalar functions of
$\xz$: $\bar{\mu}_i(\xz)$, the minimum of the multiplier over its
active arc, and $\bar{G}_i(\xz)$, the maximum constraint value over
the portions of the horizon where it is inactive.

In previously reported examples, these boundaries appear to be linear
in $\xz$, but a general explanation has not been established.
In particular, it is not clear when such linearity should be expected
and when curved boundaries arise.

In this paper, we show that a boundary is a hyperplane if and only if
the corresponding extremum is attained at a fixed horizon endpoint.
This provides a simple criterion that distinguishes between boundaries
that admit closed-form expressions and those that require numerical
approximation.

The rest of the paper is organized as follows.
Section~\ref{sec:problem} states the problem and the relevant
optimality conditions.
Section~\ref{sec:geom} proves the main result.
Section~\ref{sec:example} applies it to the third-order system.
Section~\ref{sec:ts} covers the switching-time parametrization.
Section~\ref{sec:dt} presents the discrete-time comparison.
Section~\ref{sec:conclude} closes with a summary and directions for
future work.

\section{Problem Statement}
\label{sec:problem}

\subsection{Optimal Control Problem}

Let $\bx(t)\in\R^n$ denote the state and $u(t)\in\R$ the scalar
control input.
The dynamics are
\begin{equation}
  \dot{\bx}(t) = A\bx(t) + Bu(t), \qquad \bx(0) = \xz,
  \label{eq:sys}
\end{equation}
with $A\in\R^{n\times n}$ and $B\in\R^n$ fixed.
The cost to minimize over a finite horizon $[0,\tf]$ is
\begin{equation}
  J(\xz) = \frac{1}{2}\|\bx(\tf)\|^2
          + \frac{1}{2}\int_0^{\tf}
            \bigl(\|\bx\|^2 + u^2\bigr)\,dt,
  \label{eq:cost}
\end{equation}
subject to constraints that may include a scalar input bound
$|u(t)|\le\umax$ and/or path constraints $g(\bx(t),u(t))\le 0$ for
all $t\in[0,\tf]$.
The initial condition $\xz\in\Theta\subset\R^n$ is treated as a free
parameter, and the aim is to determine the optimal control law
explicitly as a function of $\xz$ over $\Theta$.

\subsection{Pontryagin Conditions and the Switching Function}

The Pontryagin minimum principle \cite{pontryagin2018mathematical} introduces the
costate $\blam(t)\in\R^n$ governed by
\begin{equation}
  \dot{\blam}(t) = -\bx(t) - A^\top\blam(t), \qquad
  \blam(\tf) = \bx(\tf).
  \label{eq:costate}
\end{equation}
The optimal control minimizes the Hamiltonian pointwise in $u$.
For an input bound $|u|\le\umax$ the minimizer is
$u^*(t) = -\operatorname{clip}_{\umax}(\sig(t;\xz))$, where
$\operatorname{clip}_{\umax}$ projects onto $[-\umax,+\umax]$ and the
\emph{switching function} is
\begin{equation}
  \sig(t;\xz) \triangleq B^\top\blam(t;\xz).
  \label{eq:sigma}
\end{equation}
The switching function is named for its role in determining the
arc type at each time: the optimal control switches between a
free arc ($|\sig|\le\umax$) and a bound-active arc
($|\sig|>\umax$) depending on whether $\sig(t;\xz)$ lies inside
or outside $[-\umax,+\umax]$.
The \emph{switching time} $\ts(\xz)$, at which $\sig$ crosses
$\pm\umax$ and the arc type changes, is a separate quantity, namely
an implicit function of $\xz$ defined by the equation
$\sig(\ts(\xz);\xz)=\pm\umax$.
Since \eqref{eq:sys}--\eqref{eq:costate} form a linear system in the
joint state-costate vector $(\bx,\blam)$, both components at any
\emph{fixed} time $\bar{t}$ are linear in $\xz$.
The switching function therefore satisfies
\begin{equation}
  \sig(\bar{t};\xz) = \ba(\bar{t})^\top\xz + b(\bar{t})
  \label{eq:siglinear}
\end{equation}
for a vector $\ba(\bar{t})\in\R^n$ and scalar $b(\bar{t})$ that
depend on $\bar{t}$ but are independent of $\xz$.
This linearity holds at each \emph{fixed} evaluation time $\bar{t}$;
the switching time $\ts(\xz)$ at which $\sig$ crosses $\pm\umax$
is a separate, implicitly defined function of $\xz$ and is not linear.

\subsection{Critical Regions and Boundary Functions}
\label{sec:CR}

A \emph{critical region} $\mathcal{R}\subseteq\Theta$ is a maximal
connected set of initial conditions for which the optimal trajectory
follows the same arc sequence \cite{lamakani2026multiparametric}.
Two conditions characterize its interior: every inactive constraint
remains strictly inactive, and every active Lagrange multiplier
remains strictly positive throughout its active arc.

For constraint $i$ that is active over the arc
$[t_{i,s}^-(\xz),\,t_{i,s}^+(\xz)]$, multiplier positivity is
encoded by
\begin{equation}
  \bar{\mu}_i(\xz) \triangleq
    \min_{\,t\,\in\,[t_{i,s}^-(\xz),\,t_{i,s}^+(\xz)]\,}
    \mu_i(t;\xz) \;>\; 0.
  \label{eq:mubar}
\end{equation}
For constraint $i$ inactive over some portion of the horizon, the
feasibility condition over those inactive portions is
\begin{equation}
  \bar{G}_i(\xz) \triangleq
    \max_{\substack{t\,\in\,[0,\tf] \\
          t\,\notin\,[t_{i,s}^-(\xz),\,t_{i,s}^+(\xz)]}}
    g_i(t;\xz) \;<\; 0,
  \label{eq:Gbar}
\end{equation}
where the maximum excludes the active arc.
When constraint $i$ is inactive over the whole horizon, the excluded
set in \eqref{eq:Gbar} is empty and the maximum is taken over
all of $[0,\tf]$.

The boundaries of $\mathcal{R}$ are the zero level sets
$\{\xz:\bar{\mu}_i(\xz)=0\}$ and $\{\xz:\bar{G}_i(\xz)=0\}$.
Crossing $\{\xz:\bar{\mu}_i(\xz)=0\}$ signals that the multiplier
is losing positivity at some point in the active arc; crossing
$\{\xz:\bar{G}_i(\xz)=0\}$ signals that an inactive constraint
is becoming active.
In either case the arc sequence changes and the optimal solution
belongs to a different critical region.

The optimization intervals in \eqref{eq:mubar} and \eqref{eq:Gbar}
both depend on $\xz$ through the switching times $t_{i,s}^\pm(\xz)$.
Whether this dependence makes the boundary functions nonlinear, and
hence the boundaries curved, is precisely what Proposition~\ref{prop:main}
characterizes.

\section{When Are Boundaries Hyperplanes?}
\label{sec:geom}

\subsection{Main Result}

\begin{proposition}
\label{prop:main}
Let system \eqref{eq:sys} be linear and time-invariant with cost
\eqref{eq:cost} and a pure scalar input bound $|u|\le\umax$.
A critical-region boundary defined by $\{\xz:\bar{\mu}_i(\xz)=0\}$
is a hyperplane in $\xz$ if and only if the minimum in
\eqref{eq:mubar} is attained at a fixed time $\bar{t}\in\{0,\tf\}$
that is independent of $\xz$.
\end{proposition}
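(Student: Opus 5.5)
The plan has two directions, and both rest on the linear structure recorded in \eqref{eq:siglinear}. First I would make the multiplier explicit for a pure input bound. Write the constraint as $u-\umax\le 0$ (or $-u-\umax\le0$). Stationarity of the augmented Hamiltonian then gives $\mu_i(t;\xz)=|\sig(t;\xz)|-\umax$ on the active arc, with the sign of $\sig$ fixed along that arc. So $\mu_i(t;\xz)=\pm(\ba(t)^\top\xz+b(t))-\umax$ is affine in $\xz$ for every fixed $t$.

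\textbf{Sufficiency.} Suppose the minimum in \eqref{eq:mubar} is attained at $\bar t\in\{0,\tf\}$ for every $\xz$ near the boundary. Then
\[
\bar{\mu}_i(\xz)=\mu_i(\bar t;\xz)=\pm\ba(\bar t)^\top\xz\pm b(\bar t)-\umax .
\]
This is affine, so its zero set is the hyperplane $\{\xz:\pm\ba(\bar t)^\top\xz=\umax\mp b(\bar t)\}$, provided $\ba(\bar t)\neq 0$. To get the normal explicitly, I would solve the joint linear system in $(\bx,\blam)$ with Hamiltonian matrix $H=\begin{bmatrix}A & -BB^\top\\ -I & -A^\top\end{bmatrix}$. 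Partition $e^{H\tf}$ into blocks $\Phi_{jk}$ and impose $\blam(\tf)=\bx(\tf)$; this yields $\blam(0)=(\Phi_{12}-\Phi_{22})^{-1}(\Phi_{21}-\Phi_{11})\xz$ on the unconstrained arc, and the bound-active pieces contribute the affine term $b$. Hence $\ba(\bar t)^\top=B^\top[\,0\;\;I\,]e^{H\bar t}[\,I;\;M\,]$, where $M$ comes from that one linear solve. Nondegeneracy $\ba(\bar t)\ne0$ I would simply assume, as a genuine boundary requirement.

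\textbf{Necessity.} Here I argue by contrapositive. Suppose the minimizer $t^*(\xz)$ is not a fixed endpoint. Then either it lies in the interior of the active arc, or it is a moving arc endpoint $t_{i,s}^\pm(\xz)$ that is not $0$ or $\tf$.

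In the interior case, $\partial_t\sig(t^*;\xz)=0$. Since $\sig$ is smooth in $t$ (an exponential-polynomial), the implicit function theorem, under $\partial_t^2\sig\ne0$, makes $t^*$ a smooth function of $\xz$. The envelope theorem then gives $\nabla\bar\mu_i=\pm\ba(t^*(\xz))$.

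In the moving-endpoint case, $\mu_i=0$ at $t_{i,s}^\pm$ by continuity of $\sig$. The minimum there then equals $0$ identically, and the proper reading is the limit/derivative condition. The generic situation is again a time determined implicitly by $\xz$.

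In either case the level set is a hyperplane only if the normal $\ba(t^*(\xz))$ stays parallel to a fixed vector along the level set. I would show this fails generically. Differentiate along the level set: $\frac{d}{ds}\ba(t^*)=\dot\ba(t^*)\,\dot t^*$, and $\dot\ba(t)=H$-driven, equal to $B^\top[0\;I]He^{Ht}[I;M]$. This is not parallel to $\ba$ unless special invariance holds, i.e.\ $\ba(t)$ is confined to a one-dimensional $H$-invariant direction. Controllability of $(A,B)$ excludes that for $n\ge2$, so the boundary curves.

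The main obstacle is making necessity rigorous. ``If and only if'' is only true generically: nonvanishing second derivative, $\dot t^*\ne0$ along the level set, and nonparallel $\dot\ba$. I expect to state these as standing regularity assumptions, in the same sense as the paper's ``generally curved'', rather than prove them unconditionally. The hardest point is showing that $\ba(t)$ and $\dot\ba(t)$ are not collinear, which I would derive from the Kalman rank condition.
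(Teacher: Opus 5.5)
You follow the paper's route. Sufficiency reads off an affine $\mu_i(\bar t;\cdot)$ from \eqref{eq:siglinear}, and necessity is a contrapositive with the same two cases and the same chain-rule/envelope computation. Several of your refinements repair real weaknesses in the paper's own argument:
\begin{itemize}
\item You add the nondegeneracy condition $\ba(\bar t)\neq0$.
\item The paper infers ``not a hyperplane'' from ``$\nabla_{\xz}\bar\mu_i$ is nonconstant''. That is insufficient, since a gradient can change length along a planar level set. You correctly require the \emph{direction} of $\ba(t^*(\xz))$ to rotate.
\item The implicit function theorem needs $\partial_t^2\sig\neq0$, not linearity of $\dot\sig$ in $\xz$.
\item Your observation that $\mu_i=0$ at a moving junction is right. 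It shows \eqref{eq:chainA} vanishes identically: up to the sign $\pm$ it is the IFT identity $\dot\sig\,\nabla_{\xz}t_{i,s}^\pm+\ba(t_{i,s}^\pm)=0$. So the paper's Case~1 does not establish curvature.
\end{itemize}

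\textbf{First gap: the moving-junction case.} You do not close this case either. ``The proper reading is the limit/derivative condition'' is not an argument. The case also cannot be folded into ``in either case the normal is $\ba(t^*(\xz))$'', because there the function whose zero set you study is identically zero on the region. You have two options. You can state that under \eqref{eq:mubar} the zero set is then the whole region, so the contrapositive holds only trivially and \eqref{eq:mubar} is what is ill-posed. Or you can replace the closed-arc minimum by positivity on the open arc plus transversality $\dot\sig\neq0$ at the junctions. The junction-type boundary is then the tangency set $\sig=\pm\umax$, $\dot\sig=0$ at an $\xz$-dependent time, which is exactly your interior-case system.

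\textbf{Second gap: the collinearity step.} This step fails as sketched, because controllability of $(A,B)$ does not make $\sig=B^\top\blam$ observe the Hamiltonian flow. For example, $A=\bigl[\begin{smallmatrix}1/2&1\\0&1/2\end{smallmatrix}\bigr]$, $B=[1\;\;1]^\top$ is controllable, yet $(e_1,[-1\;\;1]^\top)$ is an eigenvector of your $H$ with $B^\top\blam=0$. What works is transversality. Suppose $\ba(t)^\top w\equiv0$ on an interval; analyticity extends this to $t=\tf$.
\begin{itemize}
\item At $t=\tf$, $\blam=\bx$ gives $B^\top\bx(\tf)=0$.
\item One differentiation with $\dot\blam=-\bx-A^\top\blam$ gives $(AB)^\top\bx(\tf)=0$.
\item Since $w\mapsto\bx(\tf)$ is injective, no $(n-1)$-dimensional space of such $w$ exists once $B$ and $AB$ are independent. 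Controllability guarantees this for $n\ge2$.
\end{itemize}

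\textbf{Two smaller corrections to your formulas.}
\begin{itemize}
\item On a bound-active arc the flow matrix is $\bigl[\begin{smallmatrix}A&0\\-I&-A^\top\end{smallmatrix}\bigr]$. So bound-active pieces change the normal, not only the offset $b$. On a full-horizon bound-active arc, $\ba(\tf)=e^{A^\top\tf}B$, which is why $\ell_{1,2}$ and $\ell_{3,4}$ in \eqref{eq:L1}--\eqref{eq:L4} have different normals.
\item Equation \eqref{eq:siglinear}, on which both your directions rest just as the paper's do, holds only for arc sequences without $\xz$-dependent junctions. The clip makes the state--costate system merely piecewise linear. Already for $n=1$, $A=0$, $B=1$, one finds $\sig(0;\xz)=\tfrac12(\umax+\xz^2/\umax)$ for $\umax<\xz<\umax(1+\tf)$.
\end{itemize}
Sufficiency still holds in the setting of Corollary~\ref{cor:single}, because there the zero set can be computed from the full-horizon side, where \eqref{eq:siglinear} does hold. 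Your necessity argument is on solid ground only on a full-horizon active arc, where $\bar\mu_i$ is a pointwise minimum of affine functions of $\xz$.
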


\begin{proof}
\textbf{($\Leftarrow$) Endpoint implies hyperplane.}
Suppose $\bar{\mu}_i(\xz)=\mu_i(0;\xz)$ for all $\xz$ near the
boundary (the case $\bar{t}=\tf$ follows by the same argument).
By \eqref{eq:siglinear}, $\sig(0;\xz)=\ba(0)^\top\xz+b(0)$, which
is affine in $\xz$ with coefficients fixed by the system data.
For a pure input bound the multiplier at $t=0$ equals
$\pm\sig(0;\xz)-\umax$, with the sign fixed by which bound is
active, so in both cases $\mu_i(0;\xz)$ is an affine function of $\xz$.
Setting $\bar{\mu}_i(\xz)=\mu_i(0;\xz)=0$ therefore gives
$\ba(0)^\top\xz = c$ for some fixed constant $c$, which is a hyperplane.
The same reasoning applies at $\bar{t}=\tf$.

\textbf{($\Rightarrow$) Non-endpoint implies nonlinearity.}
Suppose the minimum of $\mu_i(t;\xz)$ over the active arc is attained
at a time $\bar{t}(\xz)$ that depends on $\xz$.
Two cases arise.

\emph{Case~1: $\bar{t}(\xz)=t_{i,s}^\pm(\xz)$ is a switching time.}
The switching times depend on $\xz$ through the PMP junction
conditions \cite{lamakani2026multiparametric} and are generically nonconstant.
At a switching time the multiplier transitions between zero and a
positive value, so $\partial_t\mu_i|_{t_{i,s}^\pm}\ne 0$
by the junction conditions of \cite{lamakani2026multiparametric}.
Differentiating $\bar{\mu}_i(\xz)=\mu_i(t_{i,s}^\pm(\xz);\xz)$
by the chain rule:
\begin{equation}
  \nabla_{\xz}\bar{\mu}_i =
  \underbrace{\frac{\partial\mu_i}{\partial t}\bigg|_{t_{i,s}^\pm}}_{\ne\,0}
  \!\cdot\,\nabla_{\xz}t_{i,s}^\pm
  \;+\; \ba\!\bigl(t_{i,s}^\pm(\xz)\bigr).
  \label{eq:chainA}
\end{equation}
Both terms on the right depend on $\xz$ through $t_{i,s}^\pm(\xz)$,
so $\nabla_{\xz}\bar{\mu}_i$ is not constant and the boundary is not
a hyperplane.

\emph{Case~2: $\bar{t}(\xz)$ is an interior stationary point.}
A necessary condition for an interior minimum of $\mu_i(t;\xz)$ is
$\partial_t\mu_i(\bar{t};\xz)=0$.
For a pure input bound this reduces to $\dot{\sig}(\bar{t};\xz)=0$.
Since $\dot{\sig}(t;\xz)$ is linear in $\xz$ at each fixed $t$,
the implicit function theorem gives $\bar{t}(\xz)$ as a smooth
function of $\xz$.
Differentiating $\bar{\mu}_i(\xz)=\mu_i(\bar{t}(\xz);\xz)$:
\begin{equation}
  \nabla_{\xz}\bar{\mu}_i =
  \underbrace{\frac{\partial\mu_i}{\partial t}\bigg|_{\bar{t}}}_{=\,0}
  \!\cdot\,\nabla_{\xz}\bar{t}
  \;+\; \ba\!\bigl(\bar{t}(\xz)\bigr).
  \label{eq:chainB}
\end{equation}
The first term vanishes.
The remaining term $\ba(\bar{t}(\xz))$ is the coefficient vector of
$\sig$ at the time $\bar{t}(\xz)$, which varies with $\xz$, so
$\nabla_{\xz}\bar{\mu}_i$ is nonconstant and the boundary is not a
hyperplane.
\end{proof}

\begin{remark}
\label{rem:general}
The proof relies on \eqref{eq:siglinear}, the linearity of the
switching function at each fixed time, which holds for any linear
time-invariant system regardless of state dimension or number of arc
transitions.
For problems with path or output constraints, the multiplier
$\mu_i(t;\xz)$ at any fixed time is similarly linear in $\xz$
through the costate dynamics, so the same endpoint condition governs
whether the corresponding boundary is a hyperplane.
The present paper focuses on the input-bound case, where the full
proof is given; the extension to path constraints follows by an
analogous argument applied to the respective boundary functions
\eqref{eq:Gbar}.
\end{remark}

\subsection{The Single-Switch Case and Direct Computation}

\begin{corollary}
\label{cor:single}
For a linear time-invariant system with a scalar input bound
$|u|\le\umax$ and at most one arc transition per optimal trajectory,
every critical-region boundary is a hyperplane.
\end{corollary}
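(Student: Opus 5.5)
The plan is to reduce the corollary to the ($\Leftarrow$) direction of Proposition~\ref{prop:main}. I will show that, with at most one arc transition, every boundary function $\bar{\mu}_i$ that can define a boundary is evaluated at a fixed endpoint $\bar{t}\in\{0,\tf\}$, for all $\xz$ in a neighbourhood of that boundary. First I enumerate the admissible arc sequences. With at most one transition, an optimal trajectory is either a single arc on $[0,\tf]$ (free, or saturated at $\pm\umax$) or two arcs joined at one switching time $\ts(\xz)$. Since the terminal costate is tied to $\bx(\tf)$, which is driven toward zero, the saturated arc sits at the start and the free arc at the end. So the relevant two-arc sequence is ``saturated on $[0,\ts]$, free on $[\ts,\tf]$''; the reverse ordering would be handled symmetrically.

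Next I examine each region type. A fully free region has no active multiplier, so its only boundaries are the conditions $|\sig(t;\xz)|<\umax$ on $[0,\tf]$. In the single-switch regime these are violated first at $t=0$. There, $\sig(0;\xz)=\pm\umax$ is affine by \eqref{eq:siglinear}, which gives a hyperplane. For a region with an initial saturated arc, the multiplier is $\mu(t;\xz)=\pm\sig(t;\xz)-\umax$ on $[0,\ts(\xz)]$. It vanishes at the junction $t=\ts$, so the minimum over the closed arc is not what matters. The boundary of the region is instead where the saturated arc disappears, that is, where $\ts(\xz)\to 0$. On that set the defining condition is $\mu(0;\xz)=0$, equivalently $\pm\sig(0;\xz)=\umax$, and this is affine in $\xz$. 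The boundary toward a fully saturated region is where $\ts\to\tf$, and there the condition becomes $\mu(\tf;\xz)=0$, which is again affine by \eqref{eq:siglinear}. In every case Proposition~\ref{prop:main} ($\Leftarrow$) applies and yields the hyperplane $\ba(\bar{t})^\top\xz=c$ with $\bar{t}\in\{0,\tf\}$.

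The step I expect to be the main obstacle is excluding an interior stationary minimum, Case~2 of the proof of Proposition~\ref{prop:main}, inside the active arc. I need to show that, on a saturated arc of a single-switch trajectory, $\mu(\cdot;\xz)$ is monotone in $t$. Then its minimum, or the point where positivity is first lost, is at an endpoint of the arc. My first attempt is an argument by contradiction. If $\dot{\sig}$ changed sign inside the saturated arc while $\mu$ came down to zero, then perturbing $\xz$ would make $|\sig|$ dip below $\umax$ and come back. That would create two additional transitions, contradicting the hypothesis of at most one transition in a neighbourhood of the boundary. Hence the loss of positivity occurs only where the arc degenerates at $t=0$ or $t=\tf$. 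Those endpoints are fixed times independent of $\xz$.

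Finally, I dispose of Case~1, where the minimum sits at the moving switching time $\ts(\xz)$. The value there is identically $0$ by the junction condition, so it never defines a boundary level set separating two regions. The only genuine boundaries are therefore the endpoint ones identified above, and this concludes the argument.
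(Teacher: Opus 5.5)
Your proposal is correct and follows essentially the paper's route: enumerate the single-switch arc sequences, identify the two boundary types as the junction $\ts(\xz)$ reaching the fixed endpoint $t=0$ (free/transitional) or $t=\tf$ (transitional/fully saturated), and conclude from the affine dependence of $\sig(\bar t;\xz)$ at those fixed times via the ($\Leftarrow$) direction of Proposition~\ref{prop:main}. Where the paper simply asserts $\mu_i>0$ on the open arc ``by the junction conditions,'' you make explicit that $\mu$ vanishes at $\ts(\xz)$, so the closed-arc minimum is degenerate, and you use the one-transition hypothesis to rule out an interior tangency; note that this argument shows only that positivity cannot first be lost at an interior time, which is all you need, and not the full monotonicity you announce.
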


\begin{proof}
With at most one transition, the possible arc sequences are a
full-horizon free arc, a full-horizon bound-active arc, or a
bound-active arc $[0,\ts(\xz)]$ followed by a free arc
$[\ts(\xz),\tf]$.
Two boundary types arise.

For the boundary between the free-arc region and the transitional
regions, the bound-active arc entry time is $t_{i,s}^-(\xz)=0$, which
is a fixed horizon endpoint.
At the arc entry the multiplier transitions from zero to positive;
it satisfies $\mu_i(0;\xz)=0$ exactly on the boundary and
$\mu_i(t;\xz)>0$ for $t\in(0,\ts(\xz))$ in the interior of the
region, by the junction conditions at arc entry \cite{lamakani2026multiparametric}.
The minimum of $\mu_i$ over the active arc $[0,\ts(\xz)]$ is therefore
attained at the fixed endpoint $t=0$, and
Proposition~\ref{prop:main} gives a hyperplane.

For the boundary between the transitional regions and the full-horizon
bound-active region, the active arc covers all of $[0,\tf]$ at the
boundary, so $t_{i,s}^+(\xz)=\tf$ is a fixed endpoint.
By the junction conditions at arc exit, $\mu_i(\tf;\xz)=0$ on the
boundary and $\mu_i(t;\xz)>0$ for $t\in(0,\tf)$ in the interior,
so the minimum of $\mu_i$ over $[0,\tf]$ is attained at $t=\tf$.
Proposition~\ref{prop:main} again gives a hyperplane.
\end{proof}

Corollary~\ref{cor:single} leads directly to a computation procedure.
Partition the augmented matrix exponentials as
\begin{equation}
  \Phi_f = e^{M_f\tf} =
  \begin{bmatrix}\Phi_f^{11}&\Phi_f^{12}\\
                  \Phi_f^{21}&\Phi_f^{22}\end{bmatrix},\quad
  \Phi_s = e^{M_s\tf} =
  \begin{bmatrix}\Phi_s^{11}&\Phi_s^{12}\\
                  \Phi_s^{21}&\Phi_s^{22}\end{bmatrix},
  \label{eq:phiparts}
\end{equation}
where $M_f,M_s\in\R^{2n\times 2n}$ are the Hamiltonian system matrices
for the free and bound-active arcs respectively, each partitioned into
$n\times n$ blocks.
Solving the free-arc two-point boundary value problem yields
$\blam(0)=K_f\xz$ with
\begin{equation}
  K_f = -\bigl(\Phi_f^{22}-\Phi_f^{12}\bigr)^{-1}
          \bigl(\Phi_f^{21}-\Phi_f^{11}\bigr),
  \label{eq:Kf}
\end{equation}
from which the first pair of boundary normals is
$\ba_f = K_f^\top B \in \R^n$.
For the example in Section~\ref{sec:example} where $B=e_n$, this
simplifies to $\ba_f=[K_f]_{n,:}^\top$, the last row of $K_f$.
The bound-active TPBVP gives $\sig(\tf;\xz)=\ba_s^\top\xz+c_s$ by an
analogous computation, with $\ba_s = K_s^\top B$, yielding the second pair.
The entire procedure involves two matrix exponentials and two linear
solves; no time integration or root-finding is required.
This gives a complete analytical algorithm for computing every
critical-region boundary in problems satisfying
Corollary~\ref{cor:single}, with a computational cost that depends
only on the system order and is independent of the size or shape of
the parameter set $\Theta$.

\section{Third-Order Demonstration}
\label{sec:example}

\subsection{System and Parameter Space}

We apply the result to the third-order system
\begin{equation}
  A = \begin{bmatrix}0&1&0\\0&0&1\\-2&-2&-5\end{bmatrix},
  \quad
  B = \begin{bmatrix}0\\0\\1\end{bmatrix},
  \label{eq:AB}
\end{equation}
with $\tf=5$, $\umax=0.4$, and parameter box
$\Theta=[-2.6,2.6]\times[-0.9,0.9]\times[-0.7,0.7]$.
All eigenvalues of $A$ have strictly negative real parts, so the
open-loop system is asymptotically stable.
The input enters through the third state alone, and all three
initial-state components are free parameters.
This extends the one- and two-dimensional examples of \cite{lamakani2026multiparametric}
to a three-dimensional parameter space for the first time.

Numerical integration of the optimality system confirms that
$\sig(t;\xz)$ crosses $\pm\umax$ at most once per trajectory for every
$\xz\in\Theta$, so Corollary~\ref{cor:single} applies: every
critical-region boundary is a hyperplane.

\subsection{Boundary Computation and Verification}

Table~\ref{tab:arcs} lists the five critical regions and identifies
where $\bar{\mu}_i$ is minimized for each boundary, confirming the
endpoint condition of Proposition~\ref{prop:main} in every case.

\begin{table}[t]
\centering
\caption{CT critical regions and their boundaries.
``BA'' = bound active; ``$\to$'' = transition to free arc.
The third column gives the time at which the boundary-defining
extremum of $\bar{\mu}_i$ is attained, confirming the endpoint
condition of Proposition~\ref{prop:main} for each boundary.}
\label{tab:arcs}
\renewcommand{\arraystretch}{1.08}
\begin{tabular}{@{}lll@{}}
\toprule
Region & Arc sequence & $\bar{\mu}_i$ at \\
\midrule
CR01 & Free arc, $[0,\tf]$        & $t=0$ \\
CR02 & Upper BA $[0,\ts]\to$ free & $t=0$ \\
CR03 & Upper BA, $[0,\tf]$        & $t=\tf$ \\
CR04 & Lower BA $[0,\ts]\to$ free & $t=0$ \\
CR05 & Lower BA, $[0,\tf]$        & $t=\tf$ \\
\bottomrule
\end{tabular}
\end{table}

\textbf{Boundaries $\ell_1$, $\ell_2$} (between CR01 and CR02/CR04).
On a free arc, $\blam(0)=K_f\xz$ from \eqref{eq:Kf}.
With $B=[0,0,1]^\top$, the boundary normal is
$\ba_f = K_f^\top B = [K_f]_{3,:}^\top \in \R^3$
and the switching function at $t=0$ satisfies $\sig(0;\xz)=\ba_f^\top\xz$.
The boundary condition $|\sig(0;\xz)|=\umax$ gives
\begin{equation}
  \ell_{1,2}:\quad \ba_f^\top\xz = \pm\umax.
  \label{eq:l12}
\end{equation}

\textbf{Boundaries $\ell_3$, $\ell_4$} (between CR02/CR04 and CR03/CR05).
Under a full-horizon bound-active trajectory, the bound-active TPBVP
gives $\sig(\tf;\xz)=\ba_s^\top\xz+c_s$ where $\ba_s = K_s^\top B$.
The boundary condition $|\ba_s^\top\xz+c_s|=\umax$ at $t=\tf$ gives
\begin{equation}
  \ell_{3,4}:\quad \ba_s^\top\xz = \mp\umax - c_s.
  \label{eq:l34}
\end{equation}

Computing $e^{M_f\tf}$ and $e^{M_s\tf}$ and applying \eqref{eq:Kf}:
\begin{align}
  \ell_1:&\quad 0.2084\,x_{01}+0.8613\,x_{02}+0.2650\,x_{03}=-0.4,
  \label{eq:L1}\\
  \ell_2:&\quad 0.2084\,x_{01}+0.8613\,x_{02}+0.2650\,x_{03}=+0.4,
  \label{eq:L2}\\
  \ell_3:&\quad 0.1944\,x_{01}+0.1593\,x_{02}+0.0252\,x_{03}=-0.361,
  \label{eq:L3}\\
  \ell_4:&\quad 0.1944\,x_{01}+0.1593\,x_{02}+0.0252\,x_{03}=+0.361.
  \label{eq:L4}
\end{align}
The dominant coefficient on $x_{02}$ in $\ba_f$ reflects the strong
coupling of the second state into the switching function through the
costate dynamics; the small third coefficient in $\ba_s$ follows from
the rapid attenuation of $x_{03}$'s influence by $t=\tf$.

\subsection{Cube-Face Visualization}

Figure~\ref{fig:ct3d} renders the partition by coloring the six faces
of $\Theta$ according to the critical region they belong to.
The boundary edges visible on every face are intersections of the four
hyperplanes \eqref{eq:L1}--\eqref{eq:L4} with the respective face.
Since a hyperplane in $\R^3$ meets any flat surface in a straight line,
the straightness of those edges is a geometric consequence of
Proposition~\ref{prop:main}: if any boundary were curved in the
interior of $\Theta$, its trace on the cube face would be curved too.
The figure thus confirms the analytical result visually.

\begin{figure}[t]
  \centering
  \includegraphics[width=0.90\columnwidth]{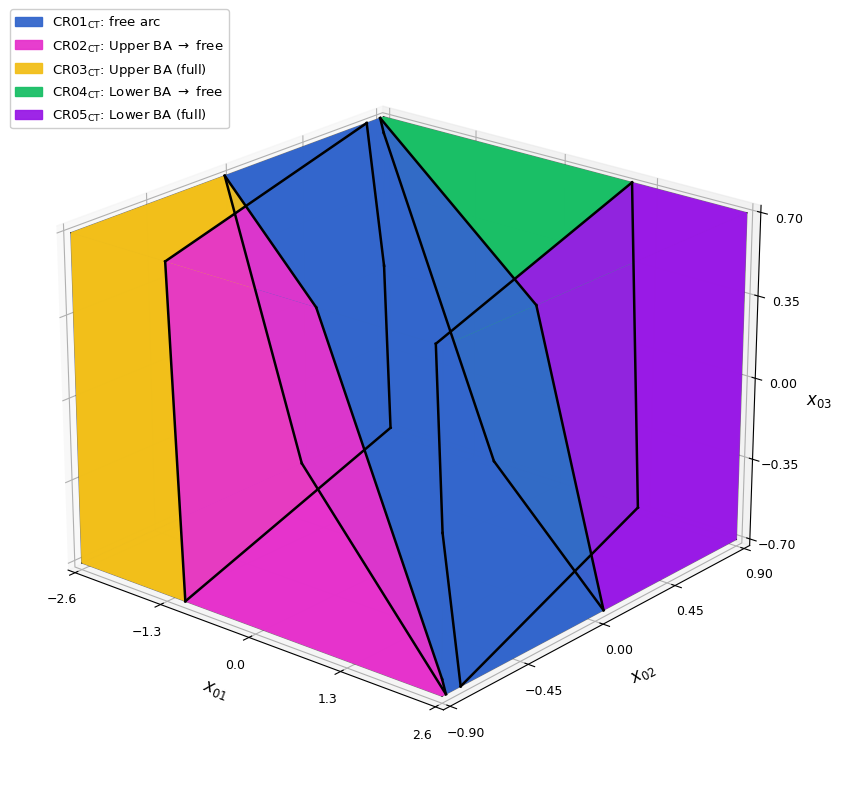}
  \caption{CT critical-region partition of
    $\Theta=[-2.6,2.6]\times[-0.9,0.9]\times[-0.7,0.7]$.
    Cube faces are colored by critical region; black lines are
    cross-sections of the four hyperplanes \eqref{eq:L1}--\eqref{eq:L4}.
    Straight edges on every face confirm the endpoint condition of
    Proposition~\ref{prop:main}.}
  \label{fig:ct3d}
\end{figure}

\section{Switching-Time Parametrization}
\label{sec:ts}

Within CR02 and CR04 the trajectory switches from a bound-active arc to
a free arc at a time $\ts(\xz)\in(0,\tf)$.
This transition instant, defined by $\sig(\ts;\xz)=\mp\umax$, is
distinct from the time at which $\bar{\mu}_i$ is minimized: by
Corollary~\ref{cor:single} the minimum occurs at the fixed endpoint
$t=0$, while $\ts$ is the interior time at which the arc structure
changes.
Because $\sig(t;\xz)$ crosses $\pm\umax$ transversally at $\ts$,
meaning $\dot{\sig}(\ts;\xz)\ne 0$ generically, bisection on
the scalar equation $\sig(t;\xz)=\mp\umax$ converges reliably for
any fixed $\xz$ and forms the basis of the numerical computation.
Since $\ts(\xz)$ is a continuous function of $\xz$ within each
region \cite{lamakani2026multiparametric}, it admits a polynomial approximation.
We compute it by bisection at sampled initial conditions and fit a
polynomial of total degree at most three in $(x_{01},x_{02},x_{03})$
by least squares.

\begin{table}[t]
\centering
\caption{Switching-time polynomial fits (degree~$3$).}
\label{tab:ts}
\renewcommand{\arraystretch}{1.08}
\begin{tabular}{@{}lccc@{}}
\toprule
Region & $R^2$ & $\ts^{\min}$ (s) & $\ts^{\max}$ (s) \\
\midrule
CR02 & 0.9892 & 0.016 & 2.689 \\
CR04 & 0.9919 & 0.010 & 2.505 \\
\bottomrule
\end{tabular}
\end{table}

Table~\ref{tab:ts} shows that degree-3 polynomials achieve $R^2>0.989$
in both regions.
All three initial-state components enter the fits with non-negligible
coefficients, reflecting the coupling that $A$ introduces among the
states during the bound-active arc.
The wide range of $\ts$, from under 0.02 seconds to nearly 2.7
seconds within a five-second horizon, shows that the transition can
occur anywhere from very early to well past the midpoint of the
planning window.

\section{Discrete-Time Comparison}
\label{sec:dt}

We solve the same problem as a discrete-time multiparametric quadratic
program to provide context for the CT results.
System~\eqref{eq:sys} is discretized by exact zero-order hold at step
size $T_s=\tf/N$, the stage cost is integrated exactly over each
interval, and the constraint $|u_k|\le\umax$ is enforced at every node.
PPOPT \cite{kenefake2022ppopt}, which implements the connected-graph
algorithm of \cite{oberdieck2017explicit}, is used to solve the resulting
multiparametric QP.

Table~\ref{tab:compare} reports the region counts for two grid sizes,
and Figures~\ref{fig:dt5} and \ref{fig:dt10} display the partitions on
the same cube as Fig.~\ref{fig:ct3d}.
Table~\ref{tab:affine} shows the affine control law for the largest
critical region at $N=5$, illustrating the form of the DT explicit
solution.

\begin{table}[t]
\centering
\caption{Critical-region counts: CT vs.\ DT on the same system
and parameter box.}
\label{tab:compare}
\renewcommand{\arraystretch}{1.08}
\begin{tabular}{@{}lcc@{}}
\toprule
Formulation & Grid $N$ & Regions \\
\midrule
Continuous-time & --- & \textbf{5} \\
\multirow{2}{*}{Discrete-time} & 5  & 23 \\
                                & 10 & 77 \\
\bottomrule
\end{tabular}
\end{table}

\begin{table}[t]
\centering
\caption{Affine control law $u_k=K_k\xz$ for the largest DT
critical region, $N=5$.}
\label{tab:affine}
\renewcommand{\arraystretch}{1.05}
\setlength{\tabcolsep}{5pt}
\begin{tabular}{@{}crrr@{}}
\toprule
$k$ & $x_{01}$ & $x_{02}$ & $x_{03}$ \\
\midrule
0 & $-0.0092$ & $-0.6397$ & $-0.1533$ \\
1 & $+0.2076$ & $-0.2382$ & $-0.0589$ \\
2 & $+0.2322$ & $+0.0407$ & $-0.0014$ \\
3 & $+0.1381$ & $+0.1330$ & $+0.0220$ \\
4 & $-0.0055$ & $+0.0428$ & $+0.0092$ \\
\bottomrule
\end{tabular}
\end{table}

\begin{figure}[t]
  \centering
  \includegraphics[width=0.90\columnwidth]{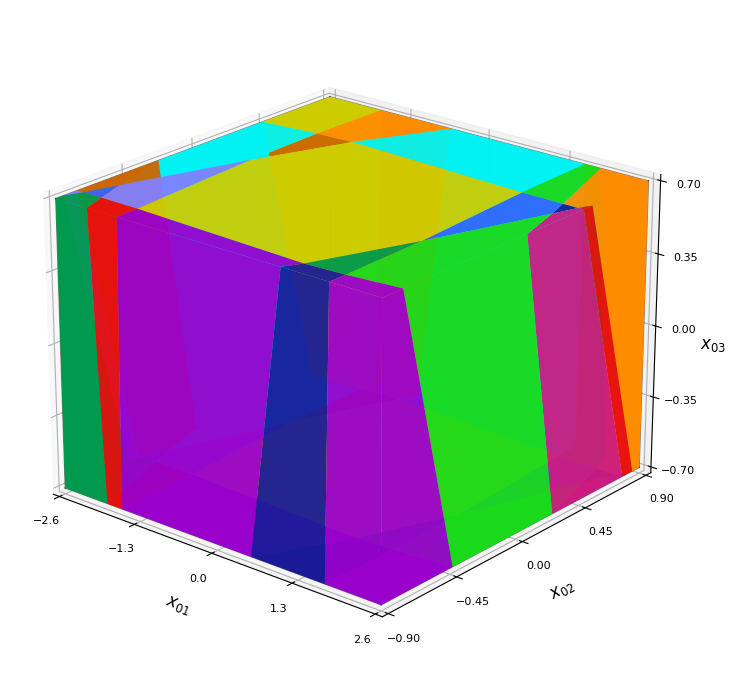}
  \caption{DT partition at $N=5$: 23 critical regions on the same cube.
    Each region corresponds to a distinct active-constraint pattern
    across the 5 grid nodes.}
  \label{fig:dt5}
\end{figure}

\begin{figure}[t]
  \centering
  \includegraphics[width=0.90\columnwidth]{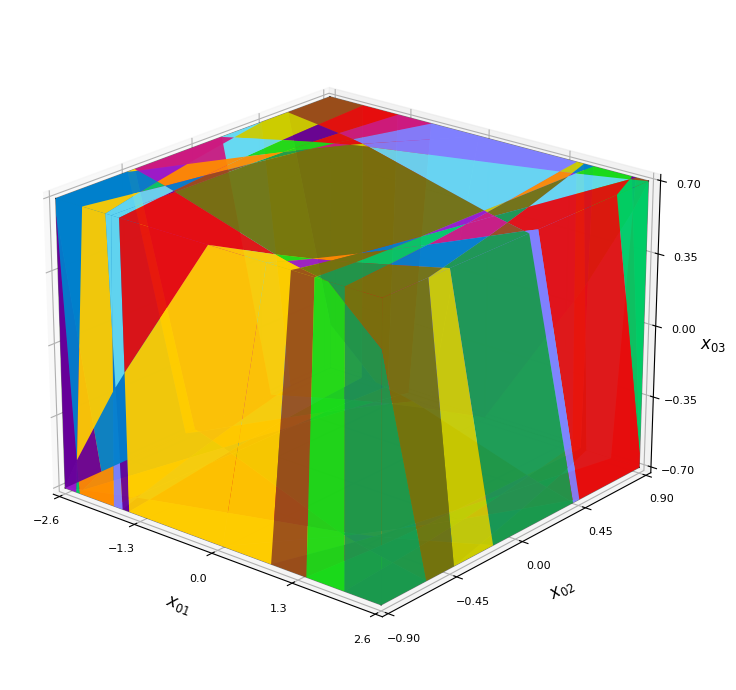}
  \caption{DT partition at $N=10$: 77 critical regions.
    The region count grows as more nodes bring new active-set
    combinations.}
  \label{fig:dt10}
\end{figure}

The DT boundaries are faces of the KKT active-set polytopes of the
finite-dimensional quadratic program and have no counterpart in the
continuous-time boundary functions \eqref{eq:mubar}--\eqref{eq:Gbar};
Proposition~\ref{prop:main} does not apply to them.
The growth in region count with the grid reflects the increasing number
of nodal active-set combinations, a purely combinatorial effect that
is unrelated to boundary geometry.
The CT partition has fewer regions because it works directly with the
differential equations without imposing a temporal grid, not because
its boundaries are hyperplanes.
The hyperplanarity is a separate property: it means the CT boundaries
in this example are available analytically from matrix exponentials,
whereas the DT boundaries require solving the full multiparametric QP.
Table~\ref{tab:affine} also illustrates a structural difference in the
solutions themselves: the CT optimal law within each region is a
single affine expression in $\xz$ valid over the entire arc, while the
DT law specifies a separate gain vector $K_k$ at each of the $N$ nodes,
so the number of stored coefficients grows as $N$ increases even
within a single region.

\section{Conclusion}
\label{sec:conclude}

We have proved that a critical-region boundary in continuous-time
multiparametric optimal control with a scalar input bound is a
hyperplane if and only if the minimum of the active Lagrange multiplier
$\bar{\mu}_i$ over its active arc is attained at a fixed horizon
endpoint independent of the initial condition.
When this holds, the boundary normal is computable from two matrix
exponentials and a linear solve, with no search over the time axis.
When it fails, because the minimum occurs at a switching time or at
an interior stationary point that moves with the initial condition,
the boundary is generically curved and requires numerical approximation.
The same endpoint principle extends to path and output constraints
through an analogous argument, as discussed in Remark~\ref{rem:general}.

The third-order demonstration gives the first three-dimensional
critical-region partition for this problem class, with four boundary
hyperplanes computed analytically and verified through the cube-face
visualization.
The side-by-side discrete-time results show how rapidly the partition
grows when the problem is discretized, while the CT partition remains
compact regardless of accuracy requirements.
Hyperplanarity is not the cause of that compactness, but it is an
additional benefit that makes the CT boundaries available in closed form
when the endpoint condition holds.

Problems with path constraints, output constraints, or multiple arc
transitions per trajectory will generally produce boundaries where the
endpoint condition fails, and Proposition~\ref{prop:main} predicts that
those boundaries are curved.
Approximating them requires locating the interior stationary points
where $\dot{\sig}(t^*;\xz)=0$, computing the switching-time sensitivity
$\nabla_{\xz}t^*$ via the implicit function theorem, and tracing the
resulting nonlinear boundary manifold by predictor-corrector continuation
or IFT-based sigma-tube certification.
Developing and certifying these methods for the multi-switch and
nonlinear boundaries is the immediate priority.
The endpoint condition of Proposition~\ref{prop:main} serves as the
precise criterion that determines, before any computation, which
boundaries fall into the closed-form class and which require this
numerical treatment.

\bibliographystyle{IEEEtran}
\bibliography{refs}

\end{document}